\DeclareMathAlphabet{\mathpzc}{OT1}{pzc}{m}{it}
\def\a{\alpha}
\def\b{\beta}
\def\d{\delta}
\def\e{\epsilon}
\def\f{\frac}                          
\def\g{\gamma}
\def\l|{\left|}
\def\la{\lambda}
\def\om{\omega}
\def\ov{\overline}                   
\def\r|{\right|}
\def\s{\sigma}
\def\un{\underline}                  
\def\z{\zeta}
\def\({\left(}
\def\){\right)}
\def\[{\left[}
\def\]{\right]}
\def\<{\left<}
\def\>{\right>}
\def\ra{\rightarrow}
\newcommand{\rz}{\raisebox{.2ex}{*}}
\def\SA{\mathcal A}
\def\SB{\mathcal B}
\def\SD{\mathcal D}
\def\SG{\mathcal G}
\def\SI{\mathcal I}
\def\SL{\mathcal L}
\def\SP{\mathcal P}
\def\SS{\mathcal S}
\def\Fb{\mathfrak b}
\def\Fc{\mathfrak c}
\def\Fj{\mathfrak j}
\def\Fl{\mathfrak l}
\def\Fo{\mathfrak o}
\def\Fr{\mathfrak r}
\def\Fs{\mathfrak s}
\def\Ft{\mathfrak t}
\def\Fu{\mathfrak u}
\def\Fv{\mathfrak v}
\def\Fw{\mathfrak w}
\def\fst{\frak{st}}
\def\pzp{\mathpzc{p}}
\def\scrb{\mathscr{B}}
\newcommand{\ccrs}[1]{\text{\scalebox{1}[.75]{\textcursive{#1}}}}
\def\bbn{{\mathbb N}}
\def\bbr{{\mathbb R}}
\def\bbz{{\mathbb Z}}
\def\ssm{\smallsetminus}
\newcommand{\bsm}[1]{\boldsymbol{#1}}    
\newtheorem{theorem}{Theorem}[section]
\newtheorem{lemma}{Lemma}[section]
\newtheorem{proposition}{Proposition}[section]
\newtheorem{definition}{Definition}[section]
\newtheorem{corollary}{Corollary}[section]
\theoremstyle{definition}
\newtheorem{remark}{Remark}[section]
\keywords{}
\subjclass[2000]{Primary 26A27; Secondary 26E35}
\title{Nonstandard techniques and nowhere differentiable functions I: A dense family of generalized blancmange functions}
\author{Tom McGaffey}
\date{}
\begin{document}

\maketitle

\begin{abstract}
    We will give an elementary nonstandard proof that the family of generalized blancmange functions are nowhere differentiable. The proof follows from the intuitive characterization of differentiability at a point as almost $\delta$ affine along with the transfer of the functional equations these functions satisfy. We also give elementary nonstandard proofs of the uniform density of these functions among continuous functions. Finally, we discuss work done with the Python programming  language in displaying these functions.

\end{abstract}

\tableofcontents
   \pagenumbering{arabic} \setcounter{page}{1}   
\section{Introduction: Monsters and nonstandard characterization of differentiability}
   As far back as Bolzano, continuous nowhere differentiable functions  have been objects of fascination for mathematicians. Beginning sometimes during the first third of the 19th century, mathematicians began constructing  these functions (often called ``monsters'' in that earlier period) to understand, refine and contrast the notions of continuity and differentiability; all in a context where the very notion of function was in contention. For a perspective embedding the production of such ``pathological'' functions in the controversies over generality and rigor in the nineteenth century, see eg., the paper of Chorley, \cite{QuestionsGeneralityChorley2009}.  We became interested in these while reading the interesting study of mathematical conceptualization by Katz and Tall, \cite{KatzTensionIntuitiveInfinitesFormalMath}. Their infinitesimal microscopic perspective and discussion of the Takagi function, appropriately dubbed blancmange function, piqued the author's curiosity about  possible infinitesimal approaches to proving nowhere differentiability of functions defined in the manner of the blancmange function. We should note that with respect to properties of this specific nowhere differentiable function, there has been a wide range of investigations;  the paper of Allaart and Kawamura, \cite{TakagiSurvey2011}, is a good summary of this research.

   With some thought, the author realized that, using some elementary tools from nonstandard analysis, he could give an almost trivial  proof that the  blancmange function is nowhere differentiable. In particular, we will use no estimates of difference quotients. Instead, a use of the transfer of the  functional equations satisfied by this function  along with some elementary nonstandard tools  are sufficient to give this short proof. More specifically, we used the transfer of the sequence functional equations (see the first sentence in  \autoref{lem: *fcnal eqn and *affine on interval}) evaluated at an infinite index along with essentially crude order of magnitude algebraic characterizations of differentiability.

   The idea to analyze the functional equation at an infinite index is inspired by the author's recent awakening (due to the gentle prodding of Mikhail Katz) to the ingenious use of such ``tricks'' by Euler. (The recent paper \cite{TenMisconceptionsPublished} is a  good introduction to the important and accumulating historical works of M. Katz and his coauthors on eg., the early history of the calculus, including recent work on Euler in manuscript form.) We believe that the arguments in eg.,  \autoref{thm: 1st blancmange result} and \autoref{thm: 2nd blancmange result} were influenced by the exposure to Euler's remarkable facility with eg., infinite sums as long finite sums and orders of magnitude numerics in place of forbidden zones of ill defined products and quotients. Maybe the best place to see these displayed is his wonderful text \cite{EulerAnalysisInfinite}, where these brilliantly orchestrated strategies occur \emph{many} times. Note that Euler typically was no more than cryptically brief in his justifications of such gymnastics.  For our project, we think that viewing the infinite series defining the blancmange function as a `long finite sum' (and hence being able to apply the functional relation for infinitely long sums), as well as investigating the `end terms' beyond this long sum for simplifying manifestations was influenced by reading Euler. Note that the text of Kanovei and Reeken, \cite{Nonstandardanalysisaxiomatically}, gives an enlightening nonstandard rendition of Euler's proof of his famous product formula for sine (that appears eg., in the text of Euler already cited.) The ``nonstandard analysis'' text of Kanovei and Reeken and that of Gordon, Kusraev and Kutateladze, \cite{InfinitesAnalyGordonKusraevKutateladzeBk2002} contains several gems on the history of the calculus and eg., on Euler.

   We then realized that we could use  almost identical arguments to establish that  a wide variety of ``generalized blancmange functions'' are nowhere differentiable. In fact, we will show that our family, $\SB$, of continuous nowhere differentiable functions is dense in the space of continuous functions on $[0,1]$ with value $0$ at $0$ and $1$, see \autoref{cor: SB is dense}. Of course, it is an old standard fact, see eg., Thim's paper, that continuous nowhere differentiable functions are not only dense, but second countable. Our fact is much different (and is apparently new): it asserts the density of $\SB$, the set of functions defined via fractal type self-similarities on a set $\SS$,  of continuous piecewise linear functions. In other words, this is the family of such functions concretely defined in terms of a piecewise continuous function $s$ and a positive integer $c$ via  a sequence of self similar functional identities. (For the definitions of $\SS$ and $\SB$, see the constructions around
   \autoref{eqn: def of little s} and
   \autoref{eqn: def of SB}.) \emph{In summary, we believe that the import of this paper can be summarized as follows. First, we give a concrete construction of a dense family of continuous, nowhere differentiable  functions with large subfamilies having quite novel behaviors. Second, the proofs of nowhere differentiability (and density) are essentially order of magnitude algebraic arguments.   }

   Our primary references  on the technical history of such functions are the extensive master's thesis of Thim, \cite{ContNowhereDifferenFcns} which masterfully covers the technical history of these constructions, as well as the earlier paper of van Embe Boas, \cite{NowhereDifferenBoasMR0274670} giving some alternative perspectives on these constructions. In perusing the history of such functions in the papers of van Embe Boas and Thim, it appears that some of the nowhere differentiable functions constructed here have not been discussed before.

   We have on the one hand the wide variety of structural features of our generalized blancmange functions and on the other apparently only a handful of visual descriptions of continuous nowhere differentiable functions in the literature.
   So with the hope of supplementing this deficit, in the last section we will discuss work we did utilizing the Python programming language. Specifically, we  wrote code to display a sequences of magnifications of a tuple of approximations of an arbitrary generalized blancmange function. We will summarize the specifications of the codes as well as display two example (using much simpler code) with the intention of giving some impression of the diversity of these functions.


\section{Nonstandard preliminaries}
\subsection{Almost affine internal functions}
  We will assume the rudiments of (Robinsonian) nonstandard analysis; eg., elementary use of transfer for functions on Euclidean spaces and an isolated use of overflow not directly related to our proof. Good elementary introductions abound, eg., see the classic introduction of Lindstr{\o}m, \cite{Lindstrom1988}.
  The central idea underlying this section is the following.  To require that a functions $f:\bbr^m\ra\bbr^n$ be differentiable at $x\in\bbr^m$  is to require that, for each positive infinitesimal $\d$,  its restriction to $\bbr^m_\d(x)$ (the $\d$-module at $x$, see below)  visually looks like an affine map, at least up to magnitudes infinitely smaller than $\d$. This is the import of \autoref{prop: NS criterion for differen of f}. So to test for differentiability of a map at a point is to check that the map has such an almost affine structure for arbitrary positive infinitesimal $\d$. Below we will develop a few basic tools around this notion of almost affineness in order to exploit our criterion for differentiability in the following sections.

  We need some basic notation. Let $\rz\bbr$ denote the field of nonstandard real numbers and ${}^\s\bbr$ denote the external subfield isomorphic to the real numbers. Let $\rz\bbr_{nes}$ denote the subring of those that are nearstandard, ie., those $\Fr\in\rz\bbr$ that are infinitesimally close to a real number $a\in{}^\s\bbr$, denoted $\Fv\sim a$. Therefore, these are those nonstandard numbers $\Fv$ with a standard part, denoted $\fst(\Fv)$, in $\bbr$. It's basic that $\fst:\rz\bbr_{nes}\ra\bbr$ is a surjective ring homomorphism with kernel the ideal (in $\rz\bbr_{nes}$) of infinitesimals, $\mu(0)$, ie., those numbers $\d\sim 0$.
 \begin{definition}
    If $\Fr$ is a positive infinitesimal, we write $\bbr_\Fr$ for the $\rz\bbr_{nes}$ submodule of $\rz\bbr$ of all numbers $\Fv$ with $|\Fv|<a\Fr$ for some $a\in\bbr_+$, ie., $\Fr\rz\bbr_{nes}$. Of course, then $\bbr^m_\Fr$ will be the $\rz\bbr_{nes}$ submodule of $\rz\bbr^m$ given by the $m$-fold Cartesian product of $\bbr_\Fr$. The $\rz\bbr_{nes}$-submodules $\bbr^k_\Fr<\rz\bbr^m$, for integers $k\leq m$ will be called $\bsm{\Fr}$\textbf{-subspaces of }$\bsm{\rz\bbr^m}$.
    If ${}^\s\bbr$ is the external subfield of standard numbers in $\rz\bbr$, then ${}^\s\bbr_\Fr$ will denote the external subring of $\bbr_\Fr$ given by $\Fr\cdot{}^\s\bbr$. We similarly define the $\bbr$-submodule  ${}^\s\bbr^m_\Fr$ of $\bbr^m_\Fr$. We will call these the $\bsm{\Fr}$\textbf{-standard vectors (in $\bsm{\bbr^m_\Fr}$)}. If $\Fr,\Fs\in\rz\bbr_+$, we will let $\Fs=\Fo(\Fr)$ denote the statement $\Fs/\Fr\sim 0$  and let $\bbr_{o(\Fr)}$ denotes those $\Fs$ with $\Fs=\Fo(\Fr)$ (we include $0$ here by convention). Given this, we clearly have the decomposition $\bbr_\Fr={}^\s\bbr_\Fr+\bbr_{o(\Fr)}$ with ${}^\s\bbr_\Fr\cap\bbr_{o(\Fr)}=\{0\}$.
    In particular, there is a surjective ring  homomorphism $\bsm{\fst_\Fr}:\bbr_\Fr\ra{}^\s\bbr_\Fr$, the $\bsm{\Fr}$\textbf{-standard part map} satisfying $\fst_\Fr$ is the identity on ${}^\s\bbr_\Fr$. Note that the kernel of the map is clearly $\bbr_{o(\Fr)}$. Clearly, also we have the $\rz\bbr_{nes}$-module version of the above, ie., a split exact sequence of $\rz\bbr_{nes}$-modules.
 \begin{align}
      \xymatrix { {}^\s\bbr^m_{\Fo(\Fr)} \ar@{^{(}->}[r]  &\bbr^m_\Fr \ar@{->>}[r]^{\fst_\Fr} &{}^\s\bbr^m_\Fr }
 \end{align}

     If $\Fu\in\rz\bbr^m$, let $\rz\bbr^m_\Fr(\Fu)=\{\Fv+\Fu:\Fv\in\rz\bbr^m_\Fr\}$. Note that $\rz\bbr^m_\Fr(\Fu)$ has the property that if $\a,\b\in\rz\bbr_{nes}$ with $\a+\b=1$ and $\Fv,\Fw\in\rz\bbr^m_\Fr(\Fu)$, then $\a\Fv+\b\Fw\in\rz\bbr^m_\Fr(\Fu)$. Hence $\rz\bbr^m_\Fr(\Fu)$ will be called an $\bsm{\Fr}$\textbf{-affine subspace} of $\rz\bbr^m$. In the usual way (via the transfer of the canonical standard affine identification $\Fu+\Fv\mapsto\Fv$) one can identify the $\Fr$-almost affine subspace $\bbr^m_\Fr(\Fu)$ with the $\Fr$-almost affine subspace $\bbr^m_\Fr$.
    Suppose that $\ccrs{f}:(\rz\bbr^m,0)\ra(\rz\bbr^n,0)$ is an internal function. We say that $\ccrs{f}$ is $\bsm{\Fr}$\textbf{-almost linear} if for all $\a,\b\in\rz\bbr_{nes}$ and $\Fv,\Fw\in\rz\bbr^m_\Fr$, we have that $\ccrs{f}(\a\Fv+\b\Fw)-\a\ccrs{f}(\Fv)-\b\ccrs{f}(\Fw)=o(\Fr)$.
 \end{definition}
 \begin{remark}\label{rem: *linear c= alm lin}
    Note that arbitrary *linear internal maps are $\Fr$-almost linear for all $\Fr$, but don't send $\bbr^m_\Fr$ into $\bbr^n_\Fr$. If such an $\ccrs{f}$ send a $\rz\bbr_{nes}$-basis of $\bbr^m_\Fr$ into $\bbr^n_\Fr$, then we do have $\ccrs{f}(\bbr^m_\Fr)\subset\bbr^n_\Fr$.
    In the case that $\Fr=1$, then $\ccrs{f}$ is $1$-almost linear implies that restricted to $\bbr^m_1=\rz\bbr^m_{nes}$  it's graph is infinitesimally close to a (possibly nonstandard) affine subspace, ie., it's standard part is an affine subspace (with possibly vertical subspaces).
 \end{remark}
   Since  standard functions (eg., our function $B$ below) will typically not satisfy $f(v_0)=v_0$  ($v_0$ being the point in the domain where we are testing for differentiability of $f$), we will need the corresponding nearness notion for affine maps. First, note that if we are looking at an internal map $\ccrs{f}:\rz\bbr^m\ra\rz\bbr^n$ restricted to  $\bbr^m_\Fr(\Fu_0)$, then the statement in the previous paragraph implies that this restriction can be considered as a map on $\bbr^m_\Fr$.
  If $\Fu_0\in\rz\bbr^m$, $\Fv_0\in\rz\bbr^n$ and $\ccrs{f}(\bbr^m_\Fr(\Fu_0))\subset\bbr^n_\Fr(\Fv_0)$, we say that $\ccrs{f}$ is \textbf{$\bsm{\Fr}$-almost affine at $\bsm{\Fu}$} if $\ccrs{f}(\a\Fv+\b\Fw)-\a\ccrs{f}(\Fv)-\b\ccrs(\Fw)=o(\Fr)$ holds for all $\a,\b\in\rz\bbr_{nes}$ with $\a+\b=1$ and $\Fv,\Fw\in\rz\bbr^m_\Fr$.
   Clearly, the sum of two $\Fr$-almost affine maps (defined on $\bbr^m_\Fr(\Fu)$ for some $\Fu$) is also $\Fr$-almost affine (with a different range). There are many other elementary properties of an $\Fr$-affine category (and relations between $\Fr$-affine and $\Fs$-affine categories) that can be straightforwardly fleshed out, but we will only develop those tools needed here.
 \begin{lemma}
   Suppose that $\SA:\rz\bbr^m\ra\rz\bbr^n$ is $\Fr$-almost affine (on $\bbr^m_\Fr(\Fu_0)$) and $\Ft_0=\SA(\Fu_0)$. Considering $\SA$ as a map on $\bbr^m_\Fr$ via the above identification, we have that $\SA-\Ft_0$ is $\Fr$-almost linear.
   In particular, suppose that $\SA$ is $\Fr$-almost affine. Considering $\SA$ as a map on $\bbr^m_\Fr$, we have that if $\SA(0)=0$, then $\SA$ is $\Fr$-almost linear. In particular, $\Fr$-almost affine maps are just internal translates of $\Fr$-almost linear maps.
 \end{lemma}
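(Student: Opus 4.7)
The plan is to exploit the identification (described just above the lemma) that lets us view $\SA$ as a map on $\bbr^m_\Fr$ satisfying $\SA(0) = \Ft_0$, where $\Ft_0 := \SA(\Fu_0)$. Setting $g := \SA - \Ft_0$, we have $g(0) = 0$, and $g$ inherits the $\Fr$-almost affine property from $\SA$: for any $\a,\b \in \rz\bbr_{nes}$ with $\a+\b = 1$ and $\Fv, \Fw \in \bbr^m_\Fr$,
\[
g(\a\Fv+\b\Fw) - \a g(\Fv) - \b g(\Fw) = \SA(\a\Fv+\b\Fw) - \a\SA(\Fv) - \b\SA(\Fw) - (1-\a-\b)\Ft_0 = o(\Fr),
\]
since $1 - \a - \b = 0$. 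Thus it suffices to show that an $\Fr$-almost affine $g$ with $g(0) = 0$ is in fact $\Fr$-almost linear.

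The argument proceeds in two steps by specializing the affine parameters. For scaling: given $\a \in \rz\bbr_{nes}$ and $\Fv \in \bbr^m_\Fr$, set $\b := 1-\a$ (which is nearstandard) and apply the affine condition to the pair $(\Fv, 0)$. Since $g(0) = 0$, this yields $g(\a\Fv) = \a g(\Fv) + o(\Fr)$. For additivity: given $\Fv, \Fw \in \bbr^m_\Fr$, observe that $2\Fv, 2\Fw \in \bbr^m_\Fr$ by closure of $\bbr^m_\Fr$ under nearstandard scalars, and apply the affine condition with $\a = \b = 1/2$ to the pair $(2\Fv, 2\Fw)$; combining with the scaling identity applied to $g(2\Fv)$ and $g(2\Fw)$ gives $g(\Fv+\Fw) = g(\Fv) + g(\Fw) + o(\Fr)$.

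Combining scaling and additivity, and using that $\a \cdot o(\Fr) = o(\Fr)$ for $\a \in \rz\bbr_{nes}$, we conclude
\[
g(\a\Fv+\b\Fw) = g(\a\Fv) + g(\b\Fw) + o(\Fr) = \a g(\Fv) + \b g(\Fw) + o(\Fr)
\]
for all $\a,\b \in \rz\bbr_{nes}$ and $\Fv,\Fw \in \bbr^m_\Fr$, which is precisely $\Fr$-almost linearity. The specialization to $\SA(0) = 0$ is immediate (it is just the case $\Ft_0 = 0$), and the converse direction of the final sentence --- that an internal translate of an $\Fr$-almost linear map is $\Fr$-almost affine --- is the telescoping identity in the first paragraph read in reverse. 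I expect no serious obstacle; the proof is essentially bookkeeping of $o(\Fr)$ errors, which is harmless because the $o(\Fr)$ terms form a $\rz\bbr_{nes}$-submodule of $\rz\bbr$ and hence are closed under nearstandard scalar multiplication and finite sums.
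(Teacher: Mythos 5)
Your proof is correct, and it takes a route that differs from the paper's in a way worth noting. The paper's proof invokes the $\Fr$-almost affine condition ``in the case of a *affine sum with three terms $\a+\b+\g=1$'': it gets additivity from the combination $1\cdot\xi+1\cdot\z+(-1)(\xi+\z)$ and homogeneity from $1\cdot\la\Fv+(-\la)\Fv+\la\cdot 0$. Strictly speaking, the definition given in the paper only quantifies over two-point combinations $\a\Fv+\b\Fw$ with $\a+\b=1$, so the paper is tacitly using an extension of its own definition (harmless, but unjustified as written). Your argument stays entirely within the two-point definition: you first get homogeneity $g(\a\Fv)=\a g(\Fv)+o(\Fr)$ from the pair $(\Fv,0)$ with $\b=1-\a$ and $g(0)=0$, then additivity from the midpoint combination $\f{1}{2}(2\Fv)+\f{1}{2}(2\Fw)$ together with homogeneity, and finally assemble full $\Fr$-almost linearity for arbitrary nearstandard $\a,\b$. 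The $o(\Fr)$ bookkeeping is sound throughout, since $\bbr_{o(\Fr)}$ is closed under nearstandard scalar multiples and finite sums, and you correctly check that $2\Fv,2\Fw,\a\Fv,\b\Fw$ all remain in $\bbr^m_\Fr$. The trade-off: the paper's three-term trick is shorter once one grants the extended form of the definition, while your version is longer by one step but proves the lemma from the definition exactly as stated; your reduction to $g=\SA-\Ft_0$ and your remark on the converse direction (translates of $\Fr$-almost linear maps are $\Fr$-almost affine) also make the final sentence of the lemma explicit, which the paper leaves to the reader.
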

 \begin{proof}
   Our proof of the first statement is essentially the usual proof that an affine function fixing the origin is linear. Letting $\SL=\SA-\Ft_0$, we must first verify that for $\xi,\z\in\bbr^m_\Fr$, $\SL(\xi+\z)\stackrel{\Fr}{\sim}\SL(\xi)+\SL(\z)$. Using the definition of $\Fr$-almost affine in the case of a *affine sum with three terms ie., $\a+\b+\g=1$, in the case where $\a=\b=1$ and $\g=-1$, we get
    \begin{align}
      0=\SL(0)=\SL(\a\xi+\b\z-\g(\xi+\z))\stackrel{\Fr}{\sim}\SL(\xi)+\SL(\z)-\SL(\xi+\z).
    \end{align}
   We must second verify that, for $\la\in\rz\bbr_{nes}$ and $\Fv\in\bbr^m_\Fr$, $\SL(\la\Fv)\stackrel{\Fr}{\sim}\la\SL(\Fv)$. In this case, we again use three term affine sums $\a\xi+\b\z+\g\s$ where $\a+\b+\g=1$. That is, we apply $\Fr$-almost affineness in the case where  $\a=1,\b=-\la,\g=\la$ and $\xi=\la\Fv,\z=\Fv$ and $\s=0$ to get
     \begin{align}
       0=\SL(\la\Fv-\la\Fv+0)\stackrel{\d}{\sim}\SL(\la\Fv)-\la\SL(\Fv).
     \end{align}
   Clearly, the second statement in the lemma follows from the first.
 \end{proof}

   If $\ccrs{f}:\rz\bbr^m\ra\rz\bbr^n$ is internal and $\Fr\in\rz\bbr$ is  positive, we define the $\bsm{\Fr}$\textbf{-dilation} of $\ccrs{f}$ to be the map $\Fr^{-1}\circ\ccrs{f}\circ\Fr:\rz\bbr^m\ra\bbr^n$, ie., the map $\Fv\mapsto \Fr^{-1}\ccrs{f}(\Fr\Fv)$.
   An $\bsm{\Fr}$\textbf{-disk}  in $\bbr^m_\Fr(\Fu)$ is a *open, *convex subset $\SD\subset\bbr^m_\Fr(\Fu)$ of the form $\Fr\cdot\!\!\rz\! D+\Fu_0$ where $D\subset\bbr^m$ is convex, open and bounded.
   The following lemma is essentially tautalogical; nonetheless, it is  included due to its importance in our argument.
 \begin{lemma}\label{lem: dilation result}
    Suppose that $\ccrs{f}:\rz\bbr^m\ra\rz\bbr^n$ is $\Fr$-almost affine on an $\Fr$-disk $\Fr\cdot\!\!\rz\! D+\Fu\subset\bbr^m_\Fr(\Fu)$. Then ${}^\Fr\ccrs{f}=\Fr^{-1}\circ\ccrs{f}\circ\Fr$ is $1$-almost affine on $\rz D+\Fr^{-1}\cdot\Fu$.
 \end{lemma}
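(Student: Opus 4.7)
The plan is a direct verification from the definition, exploiting the fact that the dilation by $\Fr^{-1}$ commutes with taking affine combinations. First I would set up the bijective correspondence between the two domains: the dilation $\Fv \mapsto \Fr^{-1}\Fv$ carries the original $\Fr$-disk $\Fr\cdot\!\rz\!D + \Fu$ onto $\rz D + \Fr^{-1}\cdot\Fu$, since $\Fr^{-1}(\Fr\cdot\!\rz\!D + \Fu) = \rz D + \Fr^{-1}\cdot\Fu$. So given arbitrary $\Fv', \Fw' \in \rz D + \Fr^{-1}\cdot\Fu$ and scalars $\a, \b \in \rz\bbr_{nes}$ with $\a + \b = 1$, I would set $\Fv = \Fr\Fv'$ and $\Fw = \Fr\Fw'$, both of which land in the original $\Fr$-disk.

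Next I would compute the affine defect of ${}^\Fr\ccrs{f}$ and rewrite it via the definition ${}^\Fr\ccrs{f}(\cdot) = \Fr^{-1}\ccrs{f}(\Fr\,\cdot)$ together with the fact that $\Fr\cdot(\a\Fv' + \b\Fw') = \a\Fv + \b\Fw$, yielding
\begin{align}
{}^\Fr\ccrs{f}(\a\Fv' + \b\Fw') - \a\,{}^\Fr\ccrs{f}(\Fv') - \b\,{}^\Fr\ccrs{f}(\Fw')
 = \Fr^{-1}\bigl[\ccrs{f}(\a\Fv + \b\Fw) - \a\ccrs{f}(\Fv) - \b\ccrs{f}(\Fw)\bigr].
\end{align}
By the $\Fr$-almost affineness hypothesis on $\ccrs{f}$, the bracketed expression is $o(\Fr)$; since $o(\Fr)/\Fr$ is by definition infinitesimal, i.e., $o(1)$, the whole right-hand side is $o(1)$. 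This is exactly the condition for ${}^\Fr\ccrs{f}$ to be $1$-almost affine on $\rz D + \Fr^{-1}\cdot\Fu$.

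There really is no obstacle here, which is why the author flags the lemma as essentially tautological. The only bookkeeping worth double-checking is that the affine combinations behave correctly under pre- and post-composition with scalar multiplication by $\Fr$ and $\Fr^{-1}$ (which they do, since scalar multiplication is linear in both slots), and that the ring-theoretic identity $\Fr^{-1} \cdot o(\Fr) = o(1)$ is used in the correct direction. The content of the lemma is really the observation that the definition of $\Fr$-almost affine is calibrated precisely so that the dilation turns a locally $\Fr$-scale affine approximation into a macroscopically ($1$-scale) visible one, which is the entire conceptual point of introducing the $\Fr$-almost affine category in the first place.
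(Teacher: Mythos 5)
Your proof is correct and is essentially the same direct computation as the paper's: both rewrite the affine defect of ${}^\Fr\ccrs{f}$ as $\Fr^{-1}$ times the defect of $\ccrs{f}$ at the dilated points and invoke the equivalence $\Fr^{-1}\Fv=o(1)\iff\Fv=o(\Fr)$. The only cosmetic difference is that the paper first reduces to the $\Fr$-almost linear case via the preceding lemma, whereas you keep the base point $\Fu$ and the constraint $\a+\b=1$ explicit throughout; nothing of substance changes.
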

 \begin{proof}
    By the previous lemma, without loss of generality assume that $\ccrs{f}$ is $\Fr$-almost linear. We must show that for all $\a,\b\in\rz\bbr_{nes}$ and $\xi,\z\in\rz\bbr^m_{nes}$, we have
      \begin{align}
        {}^\Fr\ccrs{f}(\a\xi+\b\z)-\a\;{}^\Fr\ccrs{f}(\xi)-\b\;{}^\Fr\ccrs{f}(\z)=o(1).
      \end{align}
    Writing $\xi=\ov{\xi}/\Fr$ and $\z=\ov{\z}/\Fr$ for some $\ov{\xi},\ov{\z}\in\bbr^m_\Fr$, and noting that $\ov{\xi}\mapsto \ov{\xi}/\Fr$ is a bijection $\bbr^m_\Fr\ra\bbr^m_{nes}$, we see that the previous expression holds if and only if
      \begin{align}
        \Fr^{-1}\left[\ccrs{f}(\Fr(\a\;\ov{\xi}/\Fr+\b\ov{\z}/\Fr))-\a\ccrs{f}(\Fr(\ov{\xi}/\Fr))-\b\ccrs{f}(\Fr(\ov{\z}/\Fr))\right]=o(1).
      \end{align}
   for all $\ov{\xi},\ov{\z}\in\bbr^m_\Fr$.
   Noting that for a vector $\Fv\in\rz\bbr^n$, we have $\Fr^{-1}\Fv=o(1)$ if and only if $\Fv=o(\Fr)$, we see that the previous expression is equivalent to
     \begin{align}
        \ccrs{f}(\a\ov{\xi}+\b\ov{\z})-\a\ccrs{f}(\ov{\xi})-\b\ccrs{f}(\ov{\z})=o(\Fr),
     \end{align}
   for all $\a,\b\in\rz\bbr_{nes}$ and $\ov{\xi},\ov{\z}\in\bbr^m_\Fr$, as we wanted.
 \end{proof}
 \begin{remark}\label{rem: *aff -> alm aff}
   Note that if $\SA:\rz\bbr^m\ra\rz\bbr^n$ is *affine and $\ccrs{f}$ is $\Fr$-almost affine at $\Fu$, then $\ccrs{f}+\SA$ is $\Fr$-almost affine at $\Fu$.
 \end{remark}

\subsection{Nonstandard criterion for differentiability}

   We begin with a general fact connecting differentiability given the setup in the previous part. The following facts follow essentially from basics contained in Stroyan and Luxemburg, \cite{StrLux76} and an analog is stated and proved in another form in the author's work on the inverse function theorem, \cite{McGaffeyInverseMappingThm2012}.
   The following definition and proposition are stated in stronger forms than needed in this paper. The full strength will be needed in the following paper.

 \begin{definition}
   We say that $\ccrs{f}:\rz\bbr^m\ra\rz\bbr^n$ is \textbf{$\bsm{\Fr}$-almost affine at $\bsm{\Fu_0}$ stably for all positive infinitesimals} $\bsm{\Fr}$ if the following holds. There is a linear $L:\bbr^m\ra\bbr^n$ such that $\ccrs{f}$ satisfies the following for all positive infinitesimals $\Fr$. The  map restricted to $\bbr_\Fr(\Fu_0)$, ie.,  $\ccrs{f}:\bbr^m_\Fr(\Fu_0)\ra\rz\bbr^m$ is  $\Fr$-almost affine at $\Fu_0$ such that the $\Fr$-standard part of the $\Fr$-almost linear part of $\ccrs{f}$ exists and is $L$.
 \end{definition}
    If $f:\bbr^m\ra\bbr^n$, $\Fu\in\rz\bbr^m$ and $\Fr$ is a positive infinitesimal, let $\bsm{f^\Fr_{\Fu}}$ denote the internal map $\rz f$ restricted to $\bbr^m_\Fr(\Fu)$. If $\Fu=0$, we write $f^\Fr$ for $f^\Fr_0$.

 \begin{proposition}\label{prop: NS criterion for differen of f}
    Suppose that $f:\bbr^m\ra\bbr^n$ and $x_0\in\bbr^m$. Then the following are equivalent.
   \begin{enumerate}
     \item $f$ is differentiable at $\Fu_0$.
     \item $f^\Fr_{\Fu_0}$ is $\Fr$-almost affine at $\Fu_0$  stably for all  positive infinitesimals $\Fr$.
   \end{enumerate}
 \end{proposition}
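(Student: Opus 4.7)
The plan is to reduce the proposition to the classical nonstandard characterization of differentiability, which states (by transfer of the $\epsilon$-$\delta$ definition) that $f$ is differentiable at $\Fu_0$ with derivative $L$ if and only if $\rz f(\Fu_0+h)-\rz f(\Fu_0)-\rz L(h)=o(|h|)$ for every nonzero infinitesimal $h\in\rz\bbr^m$, meaning the quantity divided by $|h|$ is infinitesimal. Given this pivot, the proof splits cleanly into the two implications; in each, the main bookkeeping is to identify the $\Fr$-standard part of the $\Fr$-almost linear part of $\rz f$ with $L$ under the canonical isomorphism ${}^\s\bbr^m_\Fr\cong\bbr^m$ that rescales by $\Fr^{-1}$.

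For $(1)\Rightarrow(2)$, I fix a positive infinitesimal $\Fr$ and, invoking the preceding lemma, reduce to the translated map $\SL(v):=\rz f(\Fu_0+v)-\rz f(\Fu_0)$ regarded as defined on $\bbr^m_\Fr$. Decomposing $\SL(v)=\rz L(v)+e(v)$, the differentiability hypothesis gives $e(v)/|v|\sim 0$ for all nonzero infinitesimal $v$; since $v\in\bbr^m_\Fr$ forces $|v|\leq C\Fr$ for some standard $C$, this upgrades uniformly to $e(v)=o(\Fr)$. I would then verify the three conditions in turn: containment $\SL(\bbr^m_\Fr)\subset\bbr^n_\Fr$ follows from $\rz L$ being *linear (hence norm-bounded after transfer) and $e(v)=o(\Fr)$; $\Fr$-almost linearity follows because *linearity of $\rz L$ kills the main term $\rz L(\a v+\b w)-\a\rz L(v)-\b\rz L(w)=0$ exactly, leaving only the uniformly $o(\Fr)$ residue $e(\a v+\b w)-\a e(v)-\b e(w)$ for $\a,\b\in\rz\bbr_{nes}$; and writing $v=\Fr a+o(\Fr)$ with $a=\fst(v/\Fr)$ yields $\fst_\Fr(\SL(v))=\Fr L(a)$, which is precisely $L$ under the identification.

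For $(2)\Rightarrow(1)$, I take an arbitrary nonzero infinitesimal $h$ and set $\Fr:=|h|$. Then $h\in\bbr^m_\Fr$ and $h/\Fr$ has unit norm, so it has a standard unit-vector part $u\in\bbr^m$, giving $h=\Fr u+o(\Fr)$. Applying hypothesis (2), $\SL(h)=\Fr L(u)+o(\Fr)$, while the *linearity of $\rz L$ combined with continuity yields $\rz L(h)=\Fr L(u)+o(\Fr)$ as well. Subtracting recovers $\rz f(\Fu_0+h)-\rz f(\Fu_0)-\rz L(h)=o(\Fr)=o(|h|)$, which is exactly the nonstandard criterion for differentiability of $f$ at $\Fu_0$ with derivative $L$.

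The only genuine obstacle is notational: one must keep straight the identification of ${}^\s\bbr^m_\Fr$ with $\bbr^m$ via the rescaling $\Fr a\leftrightarrow a$, and correctly convert the pointwise estimate $e(v)=o(|v|)$ of the classical definition into the uniform $o(\Fr)$ bound demanded by $\Fr$-almost linearity on all of $\bbr^m_\Fr$. Once the affine/dilation framework of the previous subsection is in hand, no new analytic idea is required.
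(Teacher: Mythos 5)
Your proof is correct, but it takes a genuinely different route from the paper's in the converse direction. For $(1)\Rightarrow(2)$ you and the paper do essentially the same computation: the paper verifies the $\Fr$-almost-affine identity directly from $\rz f(x_0+\Fv)=f(x_0)+L(\Fv)+o(\Fr)$ and the exact linearity of $L$, and your decomposition $\SL=\rz L+e$ with $e(\Fv)=o(\Fr)$ on $\bbr^m_\Fr$ is the same estimate repackaged. The divergence is in $(2)\Rightarrow(1)$: the paper extracts an internal statement $S(\Fr,c)$ on the ball $\|\Fv\|\leq\Fr$, notes it holds for all positive infinitesimal $\Fr$, and applies overflow to produce a standard radius witnessing the $\epsilon$-$\delta$ definition; you instead fix an arbitrary nonzero infinitesimal increment $h$, specialize the stable hypothesis at the single scale $\Fr=|h|$, and reduce to the classical pointwise-infinitesimal characterization of the derivative. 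Your version is cleaner and isolates the nice observation that only the scale $\Fr=|h|$ is ever needed, but be aware that it relocates rather than removes the overflow: the classical characterization you invoke is not obtained ``by transfer of the $\epsilon$-$\delta$ definition'' alone, since passing from ``the remainder is $o(|h|)$ for every infinitesimal $h$'' back to the standard limit statement requires exactly the overflow step the paper writes out explicitly. One further small elision: in deriving $\SL(h)=\Fr L(u)+o(\Fr)$ from $h=\Fr u+w$ with $w=o(\Fr)$, you implicitly need $\SL(w)=o(\Fr)$; this does follow from $\Fr$-almost homogeneity together with the containment $\SL(\bbr^m_\Fr)\subset\bbr^n_\Fr$ built into the definition of $\Fr$-almost affine, but it deserves a line.
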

 \begin{proof}
    Suppose that $f$ is differentiable at $x_0$ and let $L:\bbr^m\ra\bbr^n$ denote its derivative there. Let $\Fr_0$ be a positive infinitesimal. Then we clearly have  that if $\Fv\in\bbr^m_\Fr$, then $\rz f(x_0+\Fv)=f(x_0)+L(\Fv)+o(\Fr)$. In particular, if $\Fw\in\bbr^m_\Fr$ also and we have nearstandard $\a,\b$ with $\a+\b=1$, then
     \begin{align}\label{eqn: f is alm.affine eqn}
       \rz f(\a\Fv+\b\Fw)=f(x_0)+L(\a\Fv+\b\Fw)+o(\Fr).
     \end{align}
     Similarly, we have \textbf{(1)} $\a\;\rz f(\Fv)=\a f(x_0)+\a\; L(\Fv)+\Fo(\Fr)$ and \textbf{(2) }$\b\;\rz f(\Fv)=\b f(x_0)+\b L(\Fv)+\Fo(\Fr)$. Subtracting (1) and (2) from   \autoref{eqn: f is alm.affine eqn}, the linearity of $L$ implies
      \begin{align}
        \rz f(\a\Fv+\b\Fw)-\a\rz f(\Fv)-\b\;\rz f(\Fw)=f(x_0)-(\a +\b)f(x_0)+\Fo(\Fr),
      \end{align}
     and so $\a+\b=1$ finishes the first half of the proof.

     Now suppose that $\rz f$ is $\Fr$-almost affine at $\Fu_0$ stably for all $\Fr$ with (standard) linear map $L$. This just says
     for each positive infinitesimal $\Fr$ and $\Fv\in\bbr^m_\Fr$, we have $\rz f(x_0+\Fv)=f(x_0)+\rz L(\Fv)+\Fo(\Fr)$. That is, fixing $\Fr$, we have
     \textbf{(3)}: $\Fv\in\bbr)_\Fr\Rightarrow| \f{1}{\Fr}(\rz f(x_0+\Fv)-f(x_0))-\rz L(\Fv)|=\Fo(1)$.
     We need to make internal statements in order to construct a sufficiently consequential overflow. The following statements (special restrictions of the previous) will be sufficient. Let $U_\Fr$ denote the ball consisting of those  $\Fv\in\bbr_\Fr$ with $\|\Fv\|\leq\Fr$. Then,  for all $0<\Fr\sim 0$, (3) certainly implies the weaker assertion
      \begin{align}
         \Fv\in U_\Fr\;\Rightarrow\;\Big|\f{\rz f(x_0+\Fv)-\rz f(x_0)}{\Fr}-\rz L(\Fv)\Big|=\Fo(1).
      \end{align}
     The argument is finished as  follows. Replacing $=\Fo(1)$ by $< c$ for an arbitrary standard positive number $c$, we get an internal statement $S(\Fr,c)$ which holds for all positive infinitesimals $\Fr$ and hence for some positive standard $b$ by overflow. But we therefore have the statement: for every positive real $c$, there is positive real $b$ such that $S(b,c)$ holds, the criterion for differentiability at $x_0$.
 \end{proof}
\section{Nowhere differentiability of the blancmange function}
\subsection{Preliminaries}
   The blancmange function is defined as follows. (See Katz and Tall's paper for a conceptual discussion and Thim's paper for a conventional proof.) First define $s$ on the unit interval by $s(t)=t$ for $0\leq t\leq 1/2$ and $s(t)=1-t$ for $1/2<t< 1$ and extend $s$ to a function on all of $\bbr$ by defining it to have period $1$; ie., for all $j\in\bbz$ and $t\in [0,1)$ define $s(t+j)=s(t)$. By definition, $s$ is piecewise linear and continuous. Next, define it's dyadic dilations as follows. For $k\in\bbn$ and $t\in\bbr$, let $s_k(t)=s(2^k t)/2^k$. Finally, define, for $n\in\bbn$ and $t\in\bbr$
  \begin{align}
    B_n(t)=\sum_{j=0}^{n-1}s_k(t)\quad \text{and}\quad B(t)=\lim_{n\ra\infty}B_n(t).
  \end{align}
   It's clear that the above limit exists and is continuous as  $|s_k(t)|\leq 2^{k+1}$ for all $t\in\bbr$; and so is a  uniform limit of continuous functions on $[0,1]$. Letting $\bsm{B^n(t)=B(t)-B_n(t)}=\sum_{k=n}^\infty s_k(t)$, it's easy to verify the following critical facts.
 \begin{lemma}\label{lem: *fcnal eqn and *affine on interval}
    For each $n\in\bbn$ and $t\in\bbr$, we have the following functional equation $B(t)=B_n(t)+B(2^nt)/2^n$. For each $n\in\bbn$, the function $B_n$ is an affine function on the interval $(j2^{-n+1},(j+1)2^{-n+1})$ for all $j\in\bbz$.
 \end{lemma}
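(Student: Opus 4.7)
The plan is to verify both assertions of the lemma by direct manipulation of the defining series together with bookkeeping on the piecewise-linear structure of the summands $s_k$.

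For the functional equation, the idea is to split $B(t)$ at index $n$, writing $B(t) = B_n(t) + B^n(t)$ with the tail $B^n(t) := \sum_{k=n}^\infty s_k(t)$, and then reindex the tail via $k = n+\ell$ to factor out a single $2^{-n}$. Explicitly, since $s_k(t) = s(2^k t)/2^k$,
\begin{align*}
   B^n(t) \;=\; \sum_{\ell=0}^\infty \frac{s(2^\ell (2^n t))}{2^{n+\ell}} \;=\; \frac{1}{2^n}\sum_{\ell=0}^\infty s_\ell(2^n t) \;=\; \frac{B(2^n t)}{2^n}.
\end{align*}
The asserted identity follows at once, and the uniform convergence of the defining series noted in the paragraph before the lemma legitimates the reindexing.

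For the affineness, I would trace through the breakpoints of each summand. The tent $s$ is piecewise linear with breakpoints in $\tfrac12\bbz$, so its dyadic dilate $s_k(t) = s(2^k t)/2^k$ is piecewise linear with breakpoints in $2^{-(k+1)}\bbz$. A finite sum of piecewise linear functions is affine on any open interval disjoint from each summand's breakpoints, and for $k = 0,1,\ldots,n-1$ these breakpoint sets all lie in the common refinement $2^{-n}\bbz$. Hence $B_n$ is affine on each open interval in the complement of this lattice, yielding the stated assertion (up to routine checking of the indexing convention).

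Neither step presents a genuine obstacle. The only thing that needs care is the termwise rearrangement of the infinite tail, which is covered by the uniform convergence already recorded. The content of the lemma is really just the fractal self-similarity of $B$ on dyadic scales, together with the observation that $B_n$ is the piecewise linear interpolant at dyadic nodes of generation $n$.
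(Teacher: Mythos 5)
Your proof is correct, and it is the natural argument; the paper itself offers no proof of this lemma (it is prefaced only by the remark that the facts are ``easy to verify''), so there is no alternative route to compare against. The reindexing $k=n+\ell$ in the tail $B^n(t)=\sum_{k\ge n}s_k(t)$ and the breakpoint bookkeeping for the partial sums are exactly what is needed, and the convergence issue you raise is even milder than you suggest: re-indexing the tail of a convergent series needs no rearrangement theorem at all.

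One point you should not wave off as ``routine checking of the indexing convention'': your breakpoint computation shows that the lemma as printed is off by one. Since $s_k$ has its corners on $2^{-(k+1)}\bbz$, the common refinement for $k=0,\dots,n-1$ is $2^{-n}\bbz$, so $B_n$ is affine on the intervals $(j2^{-n},(j+1)2^{-n})$ --- not on $(j2^{-n+1},(j+1)2^{-n+1})$ as stated. The printed version already fails for $n=1$, where $B_1=s$ is the tent function and is certainly not affine on $(0,1)$. The corrected exponent is also what the paper actually needs downstream: in the proof of \autoref{thm: 1st blancmange result} only affineness of $B_\om$ on intervals of length $\d/2=2^{-\om-1}$ is invoked, and that follows from affineness on the length-$2^{-\om}$ intervals your argument delivers, whereas the literal statement of the lemma would transfer to a false claim about intervals of length $2\d$. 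So your proof establishes the right statement; state the exponent explicitly rather than deferring it.
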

 \begin{remark}\label{rem: overlap}
   It's easy to see that if $C:\bbr\ra\bbr$ is another function and there is $x\in\bbr$ and $k\in\bbn$ with $C$ affine on $x+(k2^{-n+1}, (k+1)2^{-n+1})$, then for some interval $I\subset(k2^{-n+1},(k+1)2^{-n+1})$ of length at least $2^{-n}$, $B_n+C$ is affine on $x+I$.
   Although it is not relevant for our proof, the ``slope'' of $B_n$ on the interval $(j2^{-n+1},(j+1)2^{-n+1})$ ia a function (of $n$) and the dyadic expansion of the integer $j$; it will have values given by an integer between $-n$ and $n$.
 \end{remark}
\subsection{Proof of nowhere differentiability}
 \begin{theorem}\label{thm: 1st blancmange result}
    The blancmange function is a continuous function that is differentiable at no point in $\bbr$.
 \end{theorem}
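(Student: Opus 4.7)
The plan is to assume $B$ is differentiable at some $t_0 \in \bbr$ with derivative $L$ and derive a contradiction by combining the $\Fr$-almost-affine characterization of differentiability from \autoref{prop: NS criterion for differen of f} with the transferred self-similarity of $B$ at an infinite index, then exploiting periodicity.

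First I would invoke \autoref{prop: NS criterion for differen of f} to obtain that $\rz B$ is $\Fr$-almost affine at $t_0$ for every positive infinitesimal $\Fr$. Fix an infinite $N \in \rz\bbn$ and set $\Fr = 2^{-N}$. The transferred functional equation from \autoref{lem: *fcnal eqn and *affine on interval} reads $\rz B(t) = \rz B_N(t) + 2^{-N}\rz B(2^N t)$. Since $\rz B_N$ is *piecewise affine with affine pieces of length comparable to $\Fr$, for $N$ chosen so that $t_0$ lies in the interior of such a piece at standard distance (in units of $\Fr$) from the break points, the restriction $\rz B_N\big|_{\bbr_\Fr(t_0)}$ is *affine. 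Then \autoref{rem: *aff -> alm aff}, applied to the difference $2^{-N}\rz B(2^N\cdot) = \rz B - \rz B_N$, yields that this remainder is also $\Fr$-almost affine at $t_0$.

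Next I would apply \autoref{lem: dilation result} to $\Fr$-dilate. The computation $\Fr^{-1}\cdot 2^{-N}\rz B\bigl(2^N(t_0+\Fr\Fv)\bigr) = \rz B(2^N t_0+\Fv)$ shows that $\Fv \mapsto \rz B(2^N t_0+\Fv)$, viewed on the nearstandard disk around $2^N t_0$, is $1$-almost affine. Taking standard parts (legitimate since $B$ is bounded) produces an affine standard function $g: \bbr \to \bbr$ with $g(v)=\fst(\rz B(2^N t_0+v))$. The transferred periodicity $\rz B(x+1)=\rz B(x)$ forces $g(v+1)=g(v)$, so the affine function $g$ has period $1$, hence is constant. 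To obtain the contradiction I would apply the elementary identity $B(x+1/2)-B(x)=1/2-2s(x)$ at $x = 2^N t_0$: constancy of $g$ forces $\fst(\rz s(2^N t_0)) = 1/4$, i.e., $2^N t_0 \bmod 1$ must lie infinitesimally close to $\{1/4, 3/4\}$. However, the doubling map sends both $1/4$ and $3/4$ to $1/2$, so the orbit of $t_0$ cannot remain near this two-point set for all infinite $N$, and an appropriately chosen $N$ gives the contradiction.

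The main obstacle will be handling $t_0$ that is infinitesimally close to a break of $\rz B_N$, where $\bbr_\Fr(t_0)$ straddles a break and $\rz B_N$ is not *affine on the full disk. For non-dyadic $t_0$ one chooses the infinite $N$ by saturation so that $\fst(2^N t_0 \bmod 1) \in (0,1)$, avoiding this issue and simultaneously avoiding the trap point $\{1/4,3/4\}$. For dyadic $t_0 = p/2^q$, where $2^N t_0$ is an integer for $N \geq q$ and the machinery breaks down, non-differentiability follows directly from the functional equation: the forward difference quotient $2^N\bigl[\rz B(t_0+2^{-N})-B(t_0)\bigr]$ telescopes (via transferred periodicity of the remainder) to the integer slope of $B_N$ on the right-adjacent piece, which grows as $(N-q)+O(1)$ and is therefore infinite, contradicting the standardness of $L$.
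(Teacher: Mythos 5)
Your overall architecture is the same as the paper's: the nonstandard almost-affine criterion, transfer of the functional equation at an infinite index, the dilation lemma, and a contradiction with the non-affineness of $B$. But there is a genuine gap at the step where you assert that, for a well-chosen infinite $N$, ``the restriction $\rz B_N\big|_{\bbr_\Fr(t_0)}$ is *affine.'' This is false for every choice of $N$: by definition $\bbr_\Fr(t_0)$ consists of all points $t_0+\Fv$ with $|\Fv|<a\Fr$ for \emph{some} standard $a>0$, so it is the whole $\Fr$-galaxy and contains, e.g., $t_0+100\Fr$, which lies many affine pieces of $\rz B_N$ away from $t_0$ (the pieces have length $2^{-N+1}=2\Fr$). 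No positioning of $t_0$ inside a piece fixes this; you appear to be conflating $\bbr_\Fr(t_0)$ with a disk of *radius $O(\Fr)$ with a bounded constant. The correct move --- and the one the paper makes via \autoref{rem: overlap} --- is to intersect $\bbr_\Fr(t_0)$ with a single affine piece of $\rz B_\om$, obtaining an $\Fr$-disk (of length between $\d/2$ and $2\d$ depending on bookkeeping) on which both the $\d$-almost affineness of $\rz B$ and the *affineness of $\rz B_\om$ hold simultaneously; only then do \autoref{rem: *aff -> alm aff} and \autoref{lem: dilation result} apply. This also removes your need to choose $N$ specially and your separate case for $t_0$ near a break: the overlap argument works uniformly in $t_0$, dyadic or not, so your telescoping-difference-quotient case is superfluous (though essentially correct).

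The second problem is the endgame. After dilation you only control $g(v)=\fst(\rz B(2^Nt_0+v))$ on a bounded interval (of length at most $2$ once the first gap is repaired, and only $1/2$ with the paper's weaker overlap estimate), so ``$g$ has period $1$, hence is constant'' needs the interval to have length greater than $1$, which you have not secured, and the subsequent reduction to the single pointwise condition $\fst(\rz s(2^Nt_0))=1/4$ followed by ``the orbit of $t_0$ cannot remain near $\{1/4,3/4\}$ for all infinite $N$'' is an unproved dynamical assertion that would require you to exhibit one infinite $N$ satisfying all your accumulated conditions at once. This detour is unnecessary: by *periodicity $g$ is a translate of $B$ itself, so $1$-almost affineness on a *interval of standard length $\ell>0$ already says $B$ is affine on a standard interval of length $\ell$, and the paper closes the argument by simply noting that $B$ is affine on no interval of length $1/2$ (the general statement is \autoref{lem: B(s,b) not affine on I}). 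If you insist on your identity $B(x+1/2)-B(x)=1/2-2s(x)$, apply it for $v$ ranging over an interval rather than at the single point $v=0$; then $s$ would have to be locally constant, which it is not, and the doubling-map argument disappears.
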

 \begin{proof}
    Suppose, by way of contradiction, that $B$ is differentiable at $t_0\in\bbr$. Let $\om\in\rz\bbn_\infty$, so that $\d=1/2^\om$ is a positive infinitesimal. Now $B$ is differentiable at $t_0$ implies that $\rz B$ is $\d$-almost affine on $\rz\bbr_\d(t_0)=t_0+\sqcup_{k\in\un{\bbn}}[k\d,(k+1)\d)$. Also by the transfer of \autoref{lem: *fcnal eqn and *affine on interval}, we have that $\rz B_\om$ is *affine on $(\Fl\d/2,(\Fl+1)\d/2)$ for all $\Fl\in\rz\bbz$ (and so eg., $\d$-almost affine in each of these intervals, see  \autoref{rem: *aff -> alm aff}). And \autoref{rem: overlap} then says there is an interval $I\subset(0,\d)$ of length at least $\d/2$ so that $\rz B- B_\om$ is $\d$-almost affine on $t_0+I$. Hence, this and the transfer of the functional equation  gives that $\d\circ\rz B\circ\d^{-1}=\rz B- B_\om$ is $\d$-almost affine on $t_0+I$. But the dilation lemma,  \autoref{lem: dilation result}, applied in dimension $1$, then implies that $\rz B$ is $1$-almost affine on $t_0/\d+\d^{-1}I$, which is an interval of length at least $1/2$, an absurdity by  \autoref{rem: *linear c= alm lin} and as $B$ is a continuous function that is not affine on any interval of length $1/2$.
 \end{proof}
 \begin{remark}
    First, note that this argument cannot work if the dilation of domain and range is not a conjugation automorphism; eg., if it is not the identity operator on the linear part of affine maps. In particular, our argument fails if we consider $s_k(t)=s(a^kt)/b^k$ for $b>a$. In fact, such functions are often differentiable, eg., see the paper of Thompson and Hagler, \cite{ParabolicTakagi}, where the authors show that, in the case $a=2,b=4$, $B$ is just part of a parabolic curve!
    Next, our construction shows that $B$ fails the nonstandard test for differentiability in a very big way. That is, it fails the test for $\d$-almost linearity for $\d=1/2^\om$ for \textbf{all} $\om\in\rz\bbn_\infty$. This is not too surprising as $B$ is standard.
 \end{remark}
\section{Generalized blancmange functions}
\subsection{Construction of $\SB$}\label{subsec: construct SB}
    Here we will see that our proof, with minor alterations, works for very large families of analogously defined functions. First, instead of the continuous piecewise affine function $s$, we will now have an open subset $\SS$ of an infinite dimensional vector space of such piecewise affine continuous function, where the function $s$ of the previous section is essentially the simplest element of this set $\SS$. (As this vector space will not play a roll here, we will leave its description to a later paper.) Further, for a given $s\in\SS$, instead of the single sequence of functional equations (generating $B$) $s_k(t)=s(2^k)/2^k$ for $k=0,1,2,\cdots$, we will have a one parameter family of such sequences $s_k(t)=s(b^kt)/b^k$ for $2<b\in\bbn$ a multiple of an integer determined by $s$. Hence, we will generate a quite large family of nowhere differentiable functions, an issue we will address after our theorem.
    In the following, hopefully the reader should see how the previous proof is very close to our proof below for these generalized blancmange functions.

   First of all, let's define an infinite  general family of generating functions, $\bsm{\SS}$ for which our generator $s$ is a single instance. As before our generator $s$ will be defined on the interval $[0,1]$ so that it can be extended to a continuous function on $\bbr$ with a period $1$. Define $s(0)=s(1)=0$ and for some  $p\in\bbn$, if  $0< i<p$, let $s(i)\in\bbr$ be arbitrary with  $s(i_0/p)$ nonzero for some $i_0$. Given that $s$ is now defined at the points $i/p$ for $0\leq i\leq p$, extend $s$ to a function on all of $[0,1]$ by linear interpolation so that $s$ will be a continuous function on $[0,1]$ that is affine on each of the intervals $(i/p,(i+1)/p)$ for $0\leq i<p$. As $s(0)=s(1)=0$, we can extend $s$ to a continuous function on all of $\bbr$ by defining $s(j+t)=s(t)$ for $j\in\bbz\ssm\{0\}$ and $t\in[0,1)$. Let $\bsm{\SS_p}$ consist of the set of all such $s$ for our given integer $p>1$ and let $\bsm{\SS}$ denote the union of all $\SS_p$ as $p>1$ varies in $\bbn$. It is no problem that this is not a disjoint union.
   Note that the  $s$ defining our blancmange function has $p=2$ and $b=2$. For $s\in\SS_p$ and $c\in\bbn$, let $b=cp$ for some $c\in\bbn$ and, for $k\in\bbn\cup\{0\}$, define
    \begin{align}\label{eqn: def of little s}
      s_k(t)=s(b^kt)/b^k.
    \end{align}
   As before, for $n\in\bbn$, defining  $B_n(t)=\sum_{j=0}^{n-1}s_j(t)$, we find  that the sequence of piecewise continuous functions $B_n$ viewed on $[0,1]$ converge uniformly to a continuous function $B=\bsm{B(s,c)}$ on $[0,1]$. So applying  periodicity, we get uniform convergence on all of $\bbr$. Given this, for $s\in\SS_p$, let
    \begin{align}\label{eqn: def of SB}
      \bsm{\SB(s)}=\{B(s,c):c\in\bbn\},\;\; \bsm{\SB(p)}=\cup\{\SB(s):s\in\SS_p\}\\
      \text{and}\;\;\bsm{\SB}=\cup\{\SB(p):p>1\;\text{is an integer}\}\qquad\quad\notag
    \end{align}
   denote the set of all of these continuous functions defined by a given generating function $s\in\SG$ and compatible dilation factor $c\in\bbn$.
\subsection{Nowhere differentiability of elements of $\SB$}
   Given the above constructions, we need a pair of lemmas before we can prove nowhere differentiability of elements of $\SB$.
   We begin with a simple analog of \autoref{lem: *fcnal eqn and *affine on interval}.
 \begin{lemma}
   Assume that $p>2$ for our piecewise linear function $s$ defined above. For each $n\in\bbn$ and $t\in\bbr$, we have the functional equation $B(t)=B_n(t)+B(b^nt)/a^n$. For each $n\in\bbn$, the function $B_n$ is an affine function on the interval $(j/(pb^n),(j+1)/(pb^n))$ for all $j\in\bbz$.
 \end{lemma}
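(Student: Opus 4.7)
The plan is to mimic the proof of \autoref{lem: *fcnal eqn and *affine on interval}, replacing the dyadic scaling factor $2$ by the integer $b=cp$ fixed in the construction of $B=B(s,c)$ in \autoref{subsec: construct SB}. Both halves of the lemma are direct consequences of the defining series $B(t)=\sum_{k=0}^{\infty}s_k(t)$ together with the explicit form $s_k(t)=s(b^kt)/b^k$ from \autoref{eqn: def of little s}.

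For the functional equation, I would split the uniformly convergent series at index $n$ and re-index the tail by $j=k-n$:
\begin{align*}
B(t)-B_n(t) \;=\; \sum_{k=n}^{\infty}\frac{s(b^kt)}{b^k} \;=\; \frac{1}{b^n}\sum_{j=0}^{\infty}\frac{s\bigl(b^{j}\,(b^nt)\bigr)}{b^{j}} \;=\; \frac{B(b^nt)}{b^n}.
\end{align*}
Rearranging yields the stated identity, with the denominator produced by the re-indexing equal to $b^n$. No independent scaling symbol is introduced anywhere in \autoref{subsec: construct SB}, and the argument of $B$ and the denominator pick up the same factor $b^n$; so the $a^n$ appearing in the statement has to be read as $b^n$ (in the notation of \autoref{eqn: def of little s} the identification $a=b$ is the only one compatible with the construction).

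For the affineness of $B_n$ on each interval $(j/(pb^n),(j+1)/(pb^n))$, recall that every $s\in\SS_p$ is defined by linear interpolation of its values at $i/p$ for $0\le i\le p$ and then extended with period $1$; hence $s$ is affine on every $(i/p,(i+1)/p)$ with $i\in\bbz$. Dilating the argument by $b^k$ and rescaling the output by $b^{-k}$ shows that $s_k$ is affine on every $(i/(pb^k),(i+1)/(pb^k))$, $i\in\bbz$. Since $pb^k$ divides $pb^n$ for each $0\le k\le n-1$, each interval $(j/(pb^n),(j+1)/(pb^n))$ is contained in a single linearity piece of every $s_k$ with $0\le k<n$, and the finite sum $B_n=\sum_{k=0}^{n-1}s_k$ is therefore affine on it.

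The only genuine obstacle is the bookkeeping around the symbol $a^n$ in the stated identity: the construction of $\SB$ supplies only the scaling parameter $b=cp$, and the reindexing of the tail produces exactly one factor $b^n$ in the denominator and one in the argument of $B$, forcing $a=b$. Once that identification is made, both assertions reduce to the same self-similarity argument used for the dyadic blancmange function, with $b$ in place of $2$ and a refinement by $p$ in place of refinement by $2$.
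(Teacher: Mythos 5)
Your proof is correct and takes essentially the same route as the paper: the functional equation follows by splitting and re-indexing the tail of the defining series (which the paper dismisses as ``easy to verify as before''), and affineness of $B_n$ follows because the breakpoints of each $s_k$ with $k<n$ are contained among those of the level-$n$ subdivision --- your divisibility observation $pb^k\mid pb^n$ is the same fact the paper phrases as the nesting of vertex sets $V_j\subset V_k$. You are also right that the $a^n$ in the statement is a slip for $b^n$; the construction in the paper introduces only the single scaling parameter $b=cp$.
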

 \begin{proof}
   The functional equations are easy to verify as before. On the other hand, note that the vertices of the affine function $s_k(t)$ are the points $V_k=\{i/(pb^k):i\in\bbz\}$. In particular, as $V_j\subset V_k$ for $j\leq k$, then all of the functions $s_k$ for $k\leq n-1$ are affine on each of the intervals that $s_{n-1}$ is (ie., those of the form $(i/(pb^{n-1}),(i+1)/(pb^{n-1}))$ for $i\in\bbz$). Therefore, the sum $s+s_1+s_2+\cdots+s_{n-1}$ is affine on each such interval.
 \end{proof}
  For a replay of our earlier proof to work, we need to prove, in contrast, that the function $B(s,b)$ is not affine on any interval of positive length in $[0,1]$, (a fact that is obvious with the original blancmange function). We prove this in the next lemma by a simple combinatorial argument. First, we need some notation. For a fixed $p>1$ in $\bbn$, $s\in\SS_p$ and $B=B(s,b)$, if  $n\in\bbn$, write $B=B_n+B^n$ where $B_n=\sum_{j<n}s_j$ and $B^n=\sum_{j\geq n}s_j$.
  \begin{lemma}\label{lem: B(s,b) not affine on I}
    Suppose that $s\in\SS$ with subdivision number $p\in\bbn$ and $b=cp$ for some $c\in\bbn$ and $B=B(s,b)$ is the function defined above. Then there is no open interval $I\subset[0,1]$ such that $B|I$ is affine.
  \end{lemma}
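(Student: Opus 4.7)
Suppose for contradiction that $B$ is affine on some open interval $I\subset[0,1]$. The plan is to exploit the self-similarity of the tail series to enlarge this affine piece step by step, until its length forces a contradiction with the periodicity of $B$ and the condition $B(0)=0$.

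First, choose $n\in\bbn$ large enough that the partition of $\bbr$ by intervals of length $1/(pb^n)$ places some subinterval $J_n=(j/(pb^n),(j+1)/(pb^n))$ entirely inside $I$. By the preceding lemma, $B_n$ is affine on $J_n$, so $B^n=B-B_n$ is affine on $J_n$ as well. The key computation is the self-similarity identity
\begin{align}
  B^n(t)\;=\;\sum_{k\geq n}\f{s(b^k t)}{b^k}\;=\;\f{1}{b^n}\sum_{k\geq 0}\f{s(b^k(b^n t))}{b^k}\;=\;\f{B(b^n t)}{b^n},
\end{align}
which upgrades the affineness of $B^n$ on $J_n$ to affineness of $B$ on the dilated interval $b^n J_n=(j/p,(j+1)/p)$. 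By periodicity of $B$ we may assume $0\leq j<p$.

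Iterate the same idea once more, this time peeling off only $s$. Since $s$ is affine on each $(i/p,(i+1)/p)$ by construction, $B-s=\sum_{k\geq 1}s_k$ is affine on $(j/p,(j+1)/p)$. The same substitution yields $\sum_{k\geq 1}s_k(t)=B(bt)/b$, so $B$ must be affine on $(jc,(j+1)c)$, an interval of length $c\geq 1$ (recall $b=pc$).

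To close the argument, note that by continuity $B$ is affine on the closed interval $[jc,(j+1)c]$. Its endpoints $jc$ and $(j+1)c$ are integers, and $B$ vanishes at every integer because $B$ is $1$-periodic with $B(0)=0$. An affine function with two distinct zeros is identically zero, so $B\equiv 0$ on $[jc,(j+1)c]$, and periodicity then forces $B\equiv 0$ on all of $\bbr$. Substituting into $s(t)=B(t)-B(bt)/b$ gives $s\equiv 0$, contradicting the requirement that $s(i_0/p)\neq 0$ for some $i_0$. The only conceptual step is spotting the self-similarity identity $B^n(t)=B(b^nt)/b^n$; once that is in hand, the remainder is elementary bookkeeping about interval endpoints and integer zeros.
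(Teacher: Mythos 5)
Your proof is correct, but it takes a genuinely different route from the one in the paper. The paper argues locally: it picks the \emph{minimal} $m$ for which some partition interval $I_{m,j}=(j/(pb^m),(j+1)/(pb^m))$ sits inside $I$, and then exhibits three explicit points of $I$ at which $B$ is not colinear --- the two endpoints of $I_{m,j}$ together with an interior vertex $j_0/(pb^{m+1})$ where $s_{m+1}$ is nonzero --- using that $B_{m+2}$ is visibly non-affine through those three points while the tail $B^{m+2}$ vanishes at all of them. You instead exploit the self-similarity identity $B^n(t)=B(b^nt)/b^n$ (equivalently, the functional equation $B=B_n+B(b^n\cdot)/b^n$ already recorded in the preceding lemma) to \emph{propagate} the affineness upward in scale: affine on $J_n$ forces affine on $(j/p,(j+1)/p)$, then one more application forces affine on $(jc,(j+1)c)$, an interval of length at least $1$ with integer endpoints, where periodicity and $B(\bbz)=\{0\}$ kill $B$ entirely and hence $s$, a contradiction. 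Each step checks out: the substitution $u=b^nt$ is an affine change of variable, $s$ is affine on each $(i/p,(i+1)/p)$ by construction, $B$ vanishes at integers since $s$ does and $b^k m\in\bbz$, and $s\equiv 0$ contradicts the stipulation $s(i_0/p)\neq 0$. What your argument buys is economy and conceptual clarity: no minimality argument, no three-point colinearity bookkeeping, and the contradiction lands on the trivial facts that $B$ is $1$-periodic with integer zeros and $s\not\equiv 0$. What the paper's version buys is locality: it identifies precisely the scale ($m+1$) and the points at which affineness fails inside the \emph{given} interval $I$, which is closer in spirit to the quantitative, order-of-magnitude style of the rest of the paper and would survive in settings where one only controls $B$ on $I$ rather than globally via periodicity.
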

  \begin{proof}
    Suppose, to the contrary, such an interval $I\subset[0,1]$ exists. That is, all points on the graph of $B$ on the interval $I$ are colinear. Then, there is a minimum $m\in\bbn$ and some $j$ so that
     \begin{align}
       I_{m,j}\dot=(\f{j}{pb^m},\f{j+1}{pb^m})\subset I.
     \end{align}
    Now one can check the following facts. (1) We have $s_{m+1}(\f{j}{pb^m})=s_{m+1}(\f{j+1}{pb^m})=0$, but $s_{m+1}(\f{j_0}{pb^{m+1}})\not=0$ for some $\f{j_0}{pb^{m+1}}\in I_{m,j}$.
    (2) For all positive integers $k\leq m$, $s_k$ is affine on $I_{m,j}$. (3) For all integers $k\geq m+2$, $s_k(\f{j}{pb^m})=s_k(\f{j+1}{pb^m})=s_k(\f{j_0}{pb^{m+1}})=0$.
    Given these three facts, we can deduce the following. First, (1) and (2) clearly imply (4): the points $B_{m+2}(\f{j}{pb^m})$, $B_{m+2}(\f{j+1}{pb^m})$ and $B_{m+2}(\f{j_0}{pb^{m+1}})$ are not colinear on the graph of $B_{m+2}$.
    On the other hand, fact (3) implies that (5): $B^{m+2}(\f{j}{pb^m})=B^{m+2}(\f{j+1}{pb^m})=B^{m+2}(\f{j_0}{pb^{m+1}})=0$. Clearly then, as $B(t)=B_{m+2}(t)+B^{m+2}(t)$ for all $t$,  facts (4) and (5) imply that the points $B(\f{j}{pb^m})$, $B(\f{j+1}{pb^m})$ and $B(\f{j_0}{pb^{m+1}})$ on the graph of $B$ are not colinear. As these points lie in the part of the graph over $I$, we have a contradiction.
  \end{proof}
   We can now verify our assertion.
 \begin{theorem}\label{thm: 2nd blancmange result}
   Suppose that $s\in\SG_p$ is one of our generating functions with $p>2$, and $B\in\SB(s,c)$ for a given $c\in\bbn$. Then $B$ is continuous and nowhere differentiable.
 \end{theorem}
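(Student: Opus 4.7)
My plan is to run the argument of \autoref{thm: 1st blancmange result} essentially verbatim, replacing the blancmange parameters $p=b=2$ by the general $p>2$ and $b=cp$, and using the two structural lemmas just proved for the generalized $B$. Continuity is immediate from \autoref{subsec: construct SB}, since $B=B(s,c)$ is a uniform limit on $[0,1]$ of the continuous piecewise affine functions $B_n$, extended by the transferred period-one periodicity. So the real content is nowhere differentiability.

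I would argue by contradiction. Suppose $B$ is differentiable at some $t_0\in\bbr$, pick an infinite $\omega\in\rz\bbn_\infty$, and set $\delta=1/b^\omega$. By \autoref{prop: NS criterion for differen of f}, $\rz B$ is $\delta$-almost affine on $\bbr_\delta(t_0)$. The transferred functional equation gives
\begin{align}
    \rz B(t)-\rz B_\omega(t)=\frac{\rz B(b^\omega t)}{b^\omega}=\delta\cdot\rz B(t/\delta),
\end{align}
so $\rz B-\rz B_\omega$ is exactly the inverse $\delta$-dilation $\delta\circ\rz B\circ\delta^{-1}$ of $\rz B$. Moreover, the transferred piecewise-affine assertion says $\rz B_\omega$ is $*$affine on each interval $(j\delta/p,(j+1)\delta/p)$, hence $\delta$-almost affine there by \autoref{rem: *aff -> alm aff}. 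The analog of \autoref{rem: overlap} (with affine pieces of length $\delta/p$ replacing $2^{-n+1}$) then yields an interval $I\subset(0,\delta)$ of length at least a standard positive multiple of $\delta$, say $\delta/(2p)$, such that $\rz B-\rz B_\omega$ is $\delta$-almost affine at $t_0+I$. Applying \autoref{lem: dilation result} and noting that the $\delta$-dilation of $\delta\circ\rz B\circ\delta^{-1}$ is $\rz B$ itself, we conclude that $\rz B$ is $1$-almost affine on an internal interval of standard length at least $1/(2p)$.

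The obstacle that is genuinely new in the generalized setting, relative to the classical blancmange case, is that $1$-almost affineness of $\rz B$ on such an interval must be translated into standard affineness of $B$ on a genuine standard interval, so that \autoref{lem: B(s,b) not affine on I} can be invoked. To close this gap I would use the transferred periodicity of $B$ to shift the interval in question (which a priori sits at the infinite scale $\delta^{-1}t_0$) back into $[0,1]$, then take standard parts: by \autoref{rem: *linear c= alm lin}, the $1$-almost affine internal map becomes genuinely affine under the standard-part operation (using continuity of $B$ at nearstandard points), so $B$ itself is affine on a standard open interval of length $1/(2p)$, contradicting \autoref{lem: B(s,b) not affine on I}. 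This last lemma is the real new ingredient specific to the generalized proof; once it is in place, the nonstandard skeleton of \autoref{thm: 1st blancmange result} transfers without modification.
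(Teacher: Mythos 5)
Your proposal is correct and follows essentially the same route as the paper's own proof: contradiction via $\delta$-almost affineness at an infinite scale, subtraction of the $*$affine $B_\omega$ using the transferred functional equation, the overlap/dilation step, and the final appeal to \autoref{lem: B(s,b) not affine on I} after shifting by $*$periodicity and taking standard parts. The only differences are cosmetic (your normalization $\delta=1/b^{\omega}$ versus the paper's $\delta=1/(pb^{\omega-1})$, and your more explicit treatment of the passage from $1$-almost affineness to standard affineness), neither of which changes the argument.
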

 \begin{proof}
    We just need to prove nowhere differentiability. Suppose, to the contrary, that $B$ is differentiable at some $t_0\in (0,1)$. Let $\om\in\rz\bbn$ be an infinite integer and let $\d=1/(pb^{\om-1})$. As $\d$ is a positive infinitesimal, then our contrary hypothesis implies that $\rz B$ is $\d$-almost affine on $\bbr_\d(t_0)\supset\sqcup\{t_0+(k\d,(k+1)\d):k\in\bbz\}$; eg., on $t_0+(0,\d)$. We also have that the transfer of \autoref{lem: *fcnal eqn and *affine on interval} (or statement (2) in the previous lemma) evaluated at $\om$ implies that $\rz B_\om$ is *affine on $(\Fj\d,(\Fj+1)\d)$ for all $\Fj\in\rz\bbz$.
    But there is $\Fj_0\in\rz\bbz$ so that $(\Fj_0\d,(\Fj_0+1)\d)$ and $t_0+(0,\d)$ intersect in an interval $\SI$ of length at least $\d/2$. That is,\linebreak  $\d\circ\rz B\circ\d^{-1}=\rz B-B_\om$ is $\d$-almost affine on $\SI$. Hence, the dilation lemma, \autoref{lem: dilation result}, implies that $\rz B$ is $1$-almost affine on $\d^{-1}\SI$, a *interval in $\rz\bbr$ of length at least $1/2$. That is, as $\rz B$ has *period $1$, then $B=\fst(\rz B)$ must be affine on an interval of length at least $1/2$.  Our contradiction then follows from  \autoref{lem: B(s,b) not affine on I}.
 \end{proof}
 \begin{remark}
    Note that our proof seems capable of giving the same conclusion with a weaker hypothesis. That is, although differentiability of $B$ at $t_0$ implies that $\rz B$ is $\d$ almost affine on all of $\bbr_\d(t_o)$, we arrived at our conclusion using the $\d$ almost affineness of $B$ only on the small segment $t_0+(0,\d)$ of $\bbr_\d(t_0)$. Further, the assumption of $\d$ almost affineness on any of the segments $t_0+(k\d,(k+1)\d)$ would yield a contradiction by the same argument. This seems to imply, for example, that $B$ does not even have one sided derivatives at any $t_0\in[0,1]$. These implication will be pursued in a later paper.
 \end{remark}
\subsection{Density of $\SB$}
    There are a fair number of constructions of continuous nowhere differentiable functions in terms of continuous piecewise affine functions. Detailed description of this work occurs in Thim's work, \cite{ContNowhereDifferenFcns}. A more limited, but more graphic display of such functions can be found in Google images under the keywords ``nowhere differentiable'', ``Weierstrass function'', ``Takagi function'', et cetera.  Beyond the blancmange function, our family of functions $\SB$ includes some described in Thim's paper, but also includes many not yet described. For example, $\SB$, includes functions generated by elements $s\in\SS$ with arbitrarily small support. Furthermore, we can also choose $s\in\SS$ that are arbitrarily Lipschitz close to eg., $\sin(\pi x)$. (For crude, but hopefully suggestive, examples of both, see the last section.)  In fact, our family is sufficiently numerous to uniformly approximate any continuous functions $f:[0,1]\ra\bbr$ sending $0$ and $1$ to $0$. Let $C^0$ denote this set of continuous functions on $[0,1]$. First, we have a lemma that is a slight generalization of Lemma 4.3 in Thim. One might notice, that besides being distinctly shorter than his proof (see pages 74-75 of his text), we use no estimates, only simple order of magnitude arguments made available by nonstandard methods. Although he claims the proof is taken essentially from Oxtoby's classic text, \cite{Oxtoby}, we could not find the relevant text in Oxtoby.
    For a function $g:[0,1]\ra\bbr$, let $\|g\|$ denote $\sup\{|g(t)|:t\in[0,1]\}$, the supremum norm. We will also use this notation in the internal realm.
 \begin{lemma}
    $\SS$ is dense in $C^0$ with respect to the uniform norm.
 \end{lemma}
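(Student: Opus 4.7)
The plan is to follow the paper's nonstandard style: given $f\in C^0$ and an infinite $\om\in\rz\bbn_\infty$, I would construct the internal piecewise *linear interpolant $\ccrs{s}_\om$ of $\rz f$ at the nodes $i/\om$ for $0\leq i\leq\om$, show it is uniformly infinitesimally close to $\rz f$, and then apply underspill to extract a standard $s_p\in\SS$ within arbitrary standard $\e>0$ of $f$. The only analytic input is $S$-continuity of $\rz f$ on $\rz[0,1]$, which follows from uniform continuity of $f$ on the compact interval $[0,1]$.

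Fix $f\in C^0$ and $\om\in\rz\bbn_\infty$, and let $\d=1/\om$. By transfer of the standard construction of $\SS_p$, there is an internal function $\ccrs{s}_\om$ determined by $\ccrs{s}_\om(i/\om)=\rz f(i/\om)$ for $0\leq i\leq\om$ with *linear interpolation between consecutive nodes. The endpoint conditions $f(0)=f(1)=0$ force $\ccrs{s}_\om(0)=\ccrs{s}_\om(1)=0$, so *periodic extension is well defined; assuming $f\not\equiv 0$ (the identically zero case is handled separately at the end by a tiny bump in $\SS$), there is some $t_0\in(0,1)$ with $f(t_0)\neq 0$, hence some $\rz f(i/\om)\neq 0$ for $i/\om\sim t_0$, so $\ccrs{s}_\om\in\rz\SS_\om$.

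To show $\rz\|\ccrs{s}_\om-\rz f\|\sim 0$, fix $\Ft\in\rz[0,1]$ and use transfer to produce $\Fi\in\rz\bbn$ with $0\leq\Fi<\om$ and $\Ft\in[\Fi\d,(\Fi+1)\d]$. Writing $\Ft=(1-\la)\Fi\d+\la(\Fi+1)\d$ for some $\la\in\rz[0,1]$, internal *linearity gives
\begin{align}
    \ccrs{s}_\om(\Ft)=(1-\la)\,\rz f(\Fi\d)+\la\,\rz f((\Fi+1)\d).
\end{align}
Since $|\Ft-\Fi\d|,|\Ft-(\Fi+1)\d|\leq\d\sim 0$, $S$-continuity of $\rz f$ (from uniform continuity of $f$ on $[0,1]$) forces $\rz f(\Fi\d)\sim\rz f(\Ft)\sim\rz f((\Fi+1)\d)$, and a nearstandard convex combination of two points infinitesimally close to $\rz f(\Ft)$ is itself infinitesimally close to $\rz f(\Ft)$. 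Hence $\ccrs{s}_\om(\Ft)\sim\rz f(\Ft)$ for every $\Ft\in\rz[0,1]$, and since the internal supremum is attained (by transfer), $\rz\|\ccrs{s}_\om-\rz f\|\sim 0$.

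Finally, fix a standard $\e>0$. The internal set of $p\in\rz\bbn$ with $p>1$ for which the $p$-mesh interpolant satisfies $\rz\|\ccrs{s}_p-\rz f\|<\e$ and has at least one nonzero node value contains every infinite $p$ by the preceding step, and it is internal since the interpolation construction is a standard function of the mesh parameter $p$. By underspill it contains a standard $p$, and the corresponding $s_p\in\SS_p\subset\SS$ satisfies $\|s_p-f\|<\e$. The main obstacle---really the only piece of non-trivial bookkeeping---is arranging that $\ccrs{s}_\om$ actually belongs to $\rz\SS_\om$ (i.e.\ has a nonzero node value), which fails only in the trivially handled case $f\equiv 0$; all of the analytic content is absorbed into $S$-continuity, which is automatic in the nonstandard framework.
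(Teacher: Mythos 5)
Your proof is correct and follows essentially the same route as the paper: build the internal piecewise *affine interpolant of $\rz f$ on an infinitesimal mesh, use $S$-continuity of $\rz f$ to get uniform infinitesimal closeness, and then pull back to a standard approximant by a permanence argument (your underspill on the mesh parameter versus the paper's observation that the internal set of achievable errors contains infinitesimals). Your extra care with the nonzero-node requirement in the definition of $\SS_p$ (the case $f\equiv 0$) is a point the paper's ``Clearly, $\ccrs{s}\in\rz\SS$'' glosses over.
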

 \begin{proof}
    Let $f\in C^0$ and $E\subset\bbr_+$ denote the set of $e\in\bbr_+$ such that there is $s\in\SS$ with $\|f-s\|<e$. It suffices to prove that $\rz E$ contains infinitesimals. So we just need to show that there is $\ccrs{s}\in\rz\SS$ with $\|\rz f-\ccrs{s}\|\sim 0$. Choose $\pzp\in\rz\bbn_\infty$ with $[\pzp]=\{0,1,\cdots,\pzp-1,\pzp\}$. Let $\SP=\{\Fj/\pzp:\Fj\in[\pzp]\}$ and $\SI_\Fj=[\Fj/\pzp,(\Fj+1)/\pzp]$, a *compact interval. Define $\ccrs{s}(\Fj/\pzp)=\rz f(\Fj/\pzp)$ for all $\Fj\in[\pzp]$ extending it to be *affine on each $\SI_\pzp$. Clearly, $\ccrs{s}\in\rz\SS$. Fixing $\Fj\in[\pzp]\ssm\{\pzp\}$, by standard continuity of $f$, $\rz f(\Ft)\sim\rz f(\Fj/\pzp)$ for all $\Ft\in\SI_\Fj$ and so by *affineness of $\ccrs{s}$ on $\SI_\Fj$, we have $\ccrs{s}(\Ft)\sim\ccrs{s}(\Fj/\pzp)$. Put together, these say that $\rz f(\Ft)\sim\ccrs{s}(\Ft)$. That is, $\rz f(\Ft)-\ccrs{s}(\Ft)\sim 0$ for all $\Ft\in\SI_\Fj$. As $\SI_\Fj$ is *compact and $\rz f-\ccrs{s}$ is *continuous, $\e_\Fj=\rz \max\{|\rz f(\Ft)-\ccrs{s}(\Ft)|:\Ft\in\SI_\Fj\}$ exists and is infinitesimal. But $\SA=\{\e_\Fj:\Fj\in\{0,1,\cdots,\pzp-1\}\}$ is a *finite (eg., internal) set of infinitesimals and so $\rz\max\SA\in\SA$, eg., is infinitesimal.
 \end{proof}
   From the above lemma, we have our assertion.
 \begin{corollary}\label{cor: SB is dense}
    $\SB$ is dense in $C^0$ in the uniform topology.
 \end{corollary}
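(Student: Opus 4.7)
The plan is to leverage the preceding lemma (density of $\SS$ in $C^0$) by showing that each $s\in\SS$ can be uniformly approximated by an element of $\SB$ that it itself generates, namely $B(s,c)$ for $c$ sufficiently large. Once both approximations are available, a single triangle inequality finishes the job.

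First I would establish the quantitative bound $\|B(s,c)-s\|\leq\|s\|/(b-1)$, where $b=cp$. This comes directly from the definition: since $B(s,c)(t)=\sum_{k=0}^{\infty}s(b^kt)/b^k$, the difference $B(s,c)-s$ is just the tail $\sum_{k\geq 1}s(b^kt)/b^k$, and by the periodicity of $s$ each term $s(b^k\cdot)$ has supremum norm $\|s\|$. Summing the geometric series yields the stated bound. Thus, fixing $s\in\SS_p$ and letting $c\to\infty$, the functions $B(s,c)\in\SB$ converge uniformly to $s$.

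Second, given $f\in C^0$ and $\varepsilon>0$, I would apply the previous lemma to produce $s\in\SS_p$ (for some $p>1$) with $\|f-s\|<\varepsilon/2$. Then I choose $c\in\bbn$ large enough that $\|s\|/(cp-1)<\varepsilon/2$; the bound above gives $\|s-B(s,c)\|<\varepsilon/2$. The triangle inequality yields $\|f-B(s,c)\|<\varepsilon$, and since $B(s,c)\in\SB$, this shows $\SB$ is dense in $C^0$.

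There is essentially no obstacle here: the argument is one geometric series estimate plus a triangle inequality. The only point worth being careful about is that the subdivision parameter $p$ depends on the chosen $s$, but this is harmless because the bound $\|s\|/(cp-1)$ tends to $0$ as $c\to\infty$ for any fixed $p\geq 2$. (If one prefers a nonstandard rendering matching the style of the previous lemma, one can instead transfer the lemma to obtain $\ccrs{s}\in\rz\SS$ with $\|\rz f-\ccrs{s}\|\sim 0$, pick an infinite $c\in\rz\bbn_\infty$, observe $\|\ccrs{s}-B(\ccrs{s},c)\|\leq\|\ccrs{s}\|/(cp-1)\sim 0$, and conclude by overflow; but the direct standard argument above is cleaner.)
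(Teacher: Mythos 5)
Your proposal is correct and is essentially the paper's own argument: both reduce to approximating a fixed $s\in\SS_p$ by $B(s,c)$ via the geometric-series tail bound $\|B(s,c)-s\|\leq\|s\|/(cp-1)$ and then invoke the density of $\SS$. The only difference is presentational — the paper takes $\Fc\in\rz\bbn_\infty$ infinite and observes the bound is infinitesimal, whereas you let $c\to\infty$ and finish with a triangle inequality — which is exactly the nonstandard rendering you note in your final parenthesis.
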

 \begin{proof}
    By the above lemma, it suffices to verify that for a fixed $s\in\SS_p$, there is $\scrb\in\rz\SB(s)$ with $\|\rz s-\scrb\|\sim 0$. Choose $\Fc\in\rz\bbn_\infty$, with $\scrb=\rz B(s,\Fc)$. Now $\Fb=\Fc\cdot p\in\rz\bbn_\infty$, and so letting $M=\|s\|$, we have for all $\Ft\in\rz [0,1]$ that
     \begin{align}
       |\rz s(\Ft)-\rz B(s,\Fc)(\Ft)|\leq \f{M}{\Fb}\cdot\rz\sum_{\Fj=0}^{\infty}\Fb^{-\Fj}\sim 0
     \end{align}
    as we wanted.
 \end{proof}

\subsection{Perspective}
     In order to prove the above results, we only needed the following facts. First, we needed a (fairly crude) nonstandard characterization of differentiability at a point $t_0$, ie., that for all positive infinitesimals $\Fr$, the function restricted to $\bbr_\Fr(t_0)$ is $\Fr$-almost affine. Second, we needed the fact that dilation sends almost affine maps to almost affine maps. Finally, third we needed the transfer of the set of functional equations as well as the fact that approximations were affine on sufficiently large intervals. In particular, we did not need nuanced versions of the nonstandard characterizations of differentiability.
     Although such a transcription is theoretically possible, from the author's perspective, a rewriting of this proof in standard language would seem to be a nontrivial task. One must standardize our strategy: we fixed an infinite index and did some fairly detailed combinatorics on the geometric configurations existing at that index.

   In our second installment, we will consider functions not generated in terms of functional equations and will use an alternative nonstandard characterization of differentiability at a point. More specifically, for a function to be differentiable at a point $t_0$, not only does $\rz f$ need to be $\d$ almost affine on $\bbr_\d$ for all infinitesimal scales, $\d$; but crudely, dilation from one infinitesimal scale to another carries our almost affine maps into each other.

\section{A computational view of elements of $\SB$}

   Due to the constructive nature and broad types of behavior of these functions, the author decided to investigate some computer visualization schemes with hopes of getting some  insight into the natures of these (continuous) nowhere differentiable functions. Others, eg., Thompson and Hagler, \cite{ParabolicTakagi}, have used numerical computational tools in attempting to gain insight into continuous nowhere differentiable functions; in fact, going at least as far back as the 1961 work of Salzer and Levine, \cite{TableWeierstrassFcn1961}.  After weeks of investigations (of Tikz, Gnuplot, Sage and other open source tools), the author decided the open source Python suite (\href{https://code.google.com/p/pythonxy/wiki/Welcome}{python(x,y)}) of abstract computational and graphing tools was best suited for this goal.  The author invested two months to learn sufficient python (and matplotlib) syntax to construct a piece of code allowing at least a multiscaled impressionistic view of these functions.

   We have two versions of the code. After compiling, both yield a full page with six  coordinate chart ``snapshots''. Each of the first five snapshots is followed by another that is a magnification (around a fixed magnification point) of the graphs on the previous coordinate chart. Each coordinate chart displays the same multicolored tuple of graphs of approximations $B_{N_1},\cdots,B_{N_k}$ of a given element $B\in\SB$. Among other parameter choices, the user can choose the center of magnification, the magnification factor and the choice of the tuple of $N_1,\cdots,N_k$, although the author has constructed the coordinate legend for a tuple of length  six or less. (The legend is not totally debugged. It's off screen on some displays, but can be pulled in using the hspace toggle of the subplot configuration tool.)  Of the two code choices, the first can be copied to an interactive console (we used Spyder lite) where it runs with little prompting. To run the code with different parameters, one must manually alter the code at eg., the number and values of the vertices, the magnification point etc.  Alternatively, the second version is written to prompt for these parameters, eg., for  the vertices of the generator $s$, where $B=B(s,1)$ (see  \autoref{subsec: construct SB}), the  focal point, etc. After first saving the code as a python file ( by eg., copying it to the Spyder text editor which can save it properly on prompting), one can then ``run'' it on Spyder with the accompanying console prompting for the desired parameters.

     The above is an outline of \emph{our} procedure; in either case, hopefully the code is sufficiently clear (to one with an elementary knowledge of python) so that the prompts can be extended by  alterations of the code allowing a more refined sequence of magnifications of the tuple of approximations of the  given element of $\SB$ (for a reader who has at hand more computational power than the author's pedestrian resources). Furthermore, the author is struggling to build computationally more efficient code (eg., using python's multiprocessing module) and welcomes the input of pythonistas. Whatever the case, a reader who might be interested in viewing the sequence of graphs of a particular element of $\SB$ is welcome to  copies of the code from the author upon request at \href{mailto:mcgaffeythomas@gmail.com}{the author's gmail account}.

     We have structured the above discussed code precisely to probe the manner in which a sequence of approximations ``fall away'' one at a time as we continue to magnify, leaving the more intense core. On the other hand, as noted above, the family $\SB$ includes numerous examples whose graphs display novel properties.  Using a greatly simplified and redirected version of the code, we've included graphs of a pair of such examples in figure \autoref{image: two graphs} formatted for this article. We display the generator $s$ and the generalized blancmange function $B(s,c)$ with $c=1$ arising from it. The graphs are given in terms of the approximation $B_{12}=\sum_{j=0}^{11}s_j$ of our function $B$.  Recall that if $p\in\bbn$ and $s\in\SS_p$ is a generator for  $B=B(s,1)\in\SB$, then $s$ is defined by the $p+1$ values $v_j=s(j/p)$ for $0\leq j\leq p$. \emph{We will denote this by} $\bsm{s=[v_0,\cdots,v_p]}$ in the graphs below.  The first has a curious smooth look and the second a sparse quality. Obviously, $p$ here is a small integer; by making $p$ arbitrarily large (or $c$ a large integer) we can accentuate these behaviors greatly.

\newpage
\vfill

\begin{figure}[!h]
  \centering
    \includegraphics[width=5in,height=8.5in]{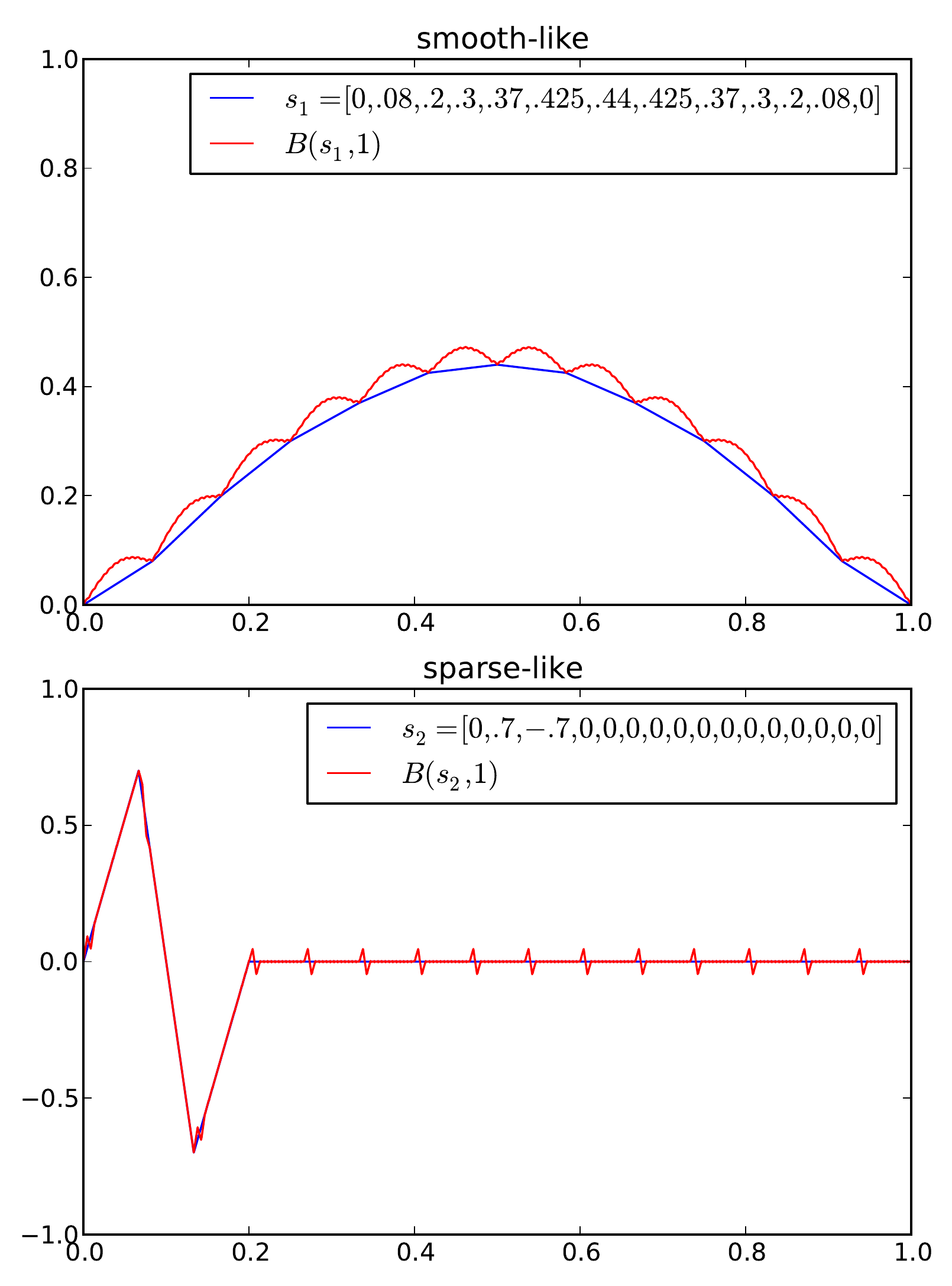}
   \caption{very different elements of $\SB$ \label{image: two graphs}}
\end{figure}

\vfill
\clearpage

\bibliographystyle{amsplain}
\bibliography{nsabooks}

\end{document}